\documentclass[11pt, letterpaper,leqno]{amsart}
\usepackage{amssymb}
\usepackage{mathtools}
\usepackage{amsthm}
\usepackage{graphicx, tikz}
\usepackage{enumerate}
\usepackage{enumitem}
\usepackage{xcolor}
\usepackage{hyperref}

\numberwithin{equation}{section}

\newtheorem{theo}{Theorem}[section]

\newtheorem{lm}[theo]{Lemma}

\DeclareMathOperator{\D}{\mathbb{D}}

\DeclareMathOperator{\C}{\mathbb{C}}
\DeclareMathOperator{\R}{\mathbb{R}}
\DeclareMathOperator{\Ha}{\mathbb{H}}

\theoremstyle{definition}

\newtheorem{rmk}[theo]{Remark}

\title[Symmetrization and semigroups of holomorphic functions]{Symmetrization and the rate of convergence of semigroups of holomorphic functions}
\author[Dimitrios Betsakos]{Dimitrios Betsakos}
\author[Argyrios Christodoulou]{Argyrios Christodoulou}
\address{Department of Mathematics, Aristotle University of Thessaloniki, 54124, Thessaloniki, Greece}
\email{betsakos@math.auth.gr}
\address{Department of Mathematics, Aristotle University of Thessaloniki, 54124, Thessaloniki, Greece}
\email{argyriac@math.auth.gr}

\subjclass[2010]{Primary  37F44; Secondary 31A15, 30C85 }

\date{}
\keywords{Semigroup of holomorphic functions,  harmonic measure, Steiner symmetrization, polarization}

\begin{document}

	\begin{abstract}
Let $(\phi_t)$, $t\ge 0$,  be a semigroup of holomorphic self-maps of the unit disk $\D$. Let $\Omega$ be its Koenigs domain and $\tau\in \partial \D$ be its Denjoy-Wolff point. Suppose that $0\in \Omega$ and let $\Omega^\sharp$ be the Steiner symmetrization of $\Omega$ with respect to the real axis. Consider the semigroup $(\phi_t^\sharp)$ with Koenigs domain $\Omega^\sharp$ and let $\tau^\sharp$ be its Denjoy-Wolff point. We show that, up to a multiplicative constant, the rate of convergence of $(\phi_t^\sharp)$ is slower than that of $(\phi_t)$; that is, for every $t>0$, $|\phi_t(0)-\tau|\leq 4\pi\,
|\phi_t^\sharp(0)-\tau^\sharp|$. The main tool for the proof is the harmonic measure.	
\end{abstract}

	\maketitle


\section{{\bf Introduction}}
We will prove that if we apply Steiner symmetrization to the Koenigs domain of a semigroup of holomorphic functions, then we obtain a new semigroup with slower rate of convergence towards the Denjoy-Wolff point. We start this introduction with the necessary background on semigroups and symmetrization.

\medskip

A continuous, one-parameter semigroup of holomorphic functions in the unit disk $\D$ 
(from now on, a \textit{semigroup in} $\D$) is a family of holomorphic functions $\phi_t:\D\to\D$, $t\ge 0$,  with the following properties:
\begin{enumerate}
	\item[(i)] $\phi_0(z)=z$, for all $z\in \D$,
	\item[(ii)] $\phi_{t+s}(z)=\phi_t\circ\phi_s(z)$, for all $t,s\ge0$ and all $z\in \D$,
	\item[(iii)] $t\mapsto \phi_t$ is continuous in the topology of uniform convergence of compacta in $\D$.  
\end{enumerate}
The rich theory of semigroups, including remarkable recent developments, is presented
in the books \cite{Aba}, \cite{book}, \cite{ES}. We review some basic facts that we will need later.

By the continuous version of the classical Denjoy-Wolff theorem \cite[Theorem 5.5.1]{Aba}, if the semigroup is not a group of elliptic conformal automorphisms of $\D$, there exists a unique point $\tau\in \overline{\D}$ (called {\it the Denjoy-Wolff point} of the semigroup) such that
	\begin{equation}
	\lim\limits_{t\to+\infty}\phi_t(z)=\tau, \quad\text{for all }z\in\mathbb{D}.
	\end{equation}
The semigroup is called {\it non-elliptic} if $\tau\in \partial \D$; in the present work, we will deal only with non-elliptic semigroups. 

\medskip

If $(\phi_t)$ is a non-elliptic semigroup, then \cite[Theorem 5.7.2]{Aba} there exists a unique conformal mapping $h:\D\to\C$, the {\it Koenigs function} of the semigroup, such that $h(0)=0$ and 
\begin{equation}
h(\phi_t(z))=h(z)+t,  \quad\text{for all }z\in\mathbb{D}\text{ and for all }t\ge 0.
\end{equation} 
The simply connected domain $\Omega\vcentcolon=h(\D)$ is the {\it Koenigs domain} of the semigroup. It is {\it convex in the positive direction}; namely, for every $t\geq 0$,
$\Omega+t\subset \Omega$.

\medskip

Conversely, suppose that $\Omega\subsetneq \C$ is a domain convex in the positive direction and $0\in \Omega$. The half-line $[0,+\infty)$ lies in $\Omega$ and determines a prime end $P_\infty$ of $\Omega$. If $\tau\in \partial \D$, we consider the Riemann map $h:\D\to\Omega$ with $h(0)=0$ and $h(\tau)=P_\infty$ (in the sense of Carath\'eodory's boundary correspondence). We define
\begin{equation}
\phi_t(z)=h^{-1}(h(z)+t):\D\to\D,  \quad\text{for all }z\in\mathbb{D}\text{ and for all }t\ge 0.
\end{equation}

Then $(\phi_t)_{t\geq 0}$ is a non-elliptic semigroup in $\D$ with Koenigs domain $\Omega$ and Denjoy-Wolff point $\tau$. 

\medskip

Thus there is an essentially one-to-one correspondence between non-elliptic semigroups and domains $\Omega$ convex in the positive direction. The geometry of $\Omega$ encodes all the properties of $(\phi_t)$. The theory of semigroups is, to a large extent, the interplay of geometric properties of $\Omega$ and dynamic properties of $(\phi_t)$. One such property of $(\phi_t)$ is its rate of convergence towards the Denjoy-Wolff point $\tau$; that is, the estimation of the quantity $|\phi_t(z)-\tau|$ (for large $t>0$) in terms of geometric properties of $\Omega$ and the position of $z$ in $\D$. This property has been extensively studied for many years; see  \cite{ES},  \cite{book}, \cite{BCD}, and references therein. 

\medskip

Suppose that $(\phi_t), (\tilde{\phi}_t)$ are two non-elliptic semigroups in $\D$ with Koenigs domains $\Omega,\tilde{\Omega}$ and Denjoy-Wolff points $\tau, \tilde{\tau}$, respectively. Bracci \cite{Bra} posed the following problem: How does the inclusion  $\Omega\subset\tilde{\Omega}$ affect the rates of convergence of the semigroups $(\phi_t), (\tilde{\phi}_t)$?
It was proved in \cite{Bra}, \cite{BCK}, \cite{BK} that there is a constant $K>0$ such that for every $t\geq 0$, 
\begin{equation}\label{i4}
|\phi_t(0)-\tau|\leq K\, |\tilde{\phi}_t(0)-\tilde{\tau}|.
\end{equation}
A similar inequality holds if we replace the point $0$ by another point $z\in \D$; in this case, the constant $K$ will depend on $z$.  
So,  the semigroup with larger Koenigs domain has slower rate of convergence.

\medskip

It is a general heuristic principle that a domain monotonicity property (like the one above) indicates that a similar symmetrization property is true, too. So, our purpose is to present and prove such a symmetrization result. If $\Omega\subsetneq \C$ is a domain, we will denote by $\Omega^\sharp$ the Steiner symmetrization of $\Omega$ (with respect to the real axis). It is defined as follows: For any real $x$, the vertical line from $x$ intersects $\Omega$ in a set of disjoint, open, vertical segments of total length
$\ell_\Omega(x)$; then
\begin{equation}
\Omega^\sharp\vcentcolon=\left \{x+iy\in\C: \;|y|< \frac{\ell_\Omega(x)}{2}\right \}.
\end{equation}
Note that $\Omega^\sharp$ is symmetric with respect to the real axis and its intersection with any vertical line is either empty or a symmetric, open, vertical segment. Similarly, for a closed set $F\subset \C$, we define its Steiner symmetrization
\begin{equation}
F^\sharp\vcentcolon=\left \{x+iy\in\C: \;|y|\le\frac{\ell_F(x)}{2}\right \},
\end{equation}
where $\ell_F(x)$ is the length of the intersection of $F$ with the vertical line from $x\in\R$. See Figure \ref{symmetrization figure} for an example Steiner symmetrization. We refer to the books \cite{Hay94}, \cite{Dub} for properties and applications of Steiner symmetrization in geometric function theory. 
If, in addition, $\Omega$ is convex in the positive direction, then (see Lemma \ref{T1L1})
$\Omega^\sharp$ is also convex in the positive direction. It follows that we can consider the semigroups  $(\phi_t), (\phi^\sharp_t)$ having Koenigs domains
 $\Omega, \Omega^\sharp$, respectively. Based on the heuristic principle mentioned above, we state the following result. 

\medskip

\begin{theo}\label{T1}
Let $(\phi_t)$ be a non-elliptic semigroup in $\D$ with Denjoy-Wolff point $\tau$ and Koenigs domain $\Omega$. Let $\Omega^\sharp$ be the Steiner symmetrization of $\Omega$.  Consider the semigroup $(\phi^\sharp_t)$  in $\D$ having 
Denjoy-Wolff point $\tau^\sharp$ and Koenigs domain $\Omega^\sharp$. Then, for every $t\ge 0$,
\begin{equation}
|\phi_t(0)-\tau|\leq 4\pi\,|\phi^\sharp_t(0)-\tau^\sharp|.
\end{equation}
\end{theo}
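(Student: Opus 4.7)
Write $p_t \vcentcolon= \phi_t(0) = h^{-1}(t)$ and $p_t^\sharp \vcentcolon= \phi_t^\sharp(0) = (h^\sharp)^{-1}(t)$, where $h,h^\sharp$ are the Koenigs functions of $(\phi_t),(\phi_t^\sharp)$ normalised by $h(0)=h^\sharp(0)=0$. Since $\Omega^\sharp$ is symmetric under $w\mapsto \bar{w}$, I may choose $h^\sharp$ real-symmetric, so that the real diameter of $\D$ maps to the real trace of $\Omega^\sharp$; this forces $\tau^\sharp=1$, $p_t^\sharp\in(0,1)$ and therefore $|p_t^\sharp-\tau^\sharp|=1-p_t^\sharp$.

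The strategy is to trap both $|p_t-\tau|$ and $1-p_t^\sharp$ on a common harmonic-measure scaffold in the respective Koenigs domain and then link them by a Steiner-symmetrisation inequality. Relying on harmonic-measure estimates at the Denjoy--Wolff point in the spirit of \cite{BCD} and \cite{BK}, I would introduce, for each $t>0$, a suitable set $E_t\subset\overline{\Omega}$ concentrated near the prime end at infinity (for instance of the form $(\partial\Omega\cap\{\operatorname{Re}w\ge t\})\cup (\Omega\cap\{\operatorname{Re}w=t\})$, viewed as boundary of the component of $\Omega\cap\{\operatorname{Re}w<t\}$ containing $0$), together with its Steiner-symmetric analogue $E_t^\sharp\subset\overline{\Omega^\sharp}$. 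The two target one-sided bounds are
$$|p_t-\tau|\le 2\pi\,\omega(0,E_t,\Omega),\qquad \omega(0,E_t^\sharp,\Omega^\sharp)\le 2\,(1-p_t^\sharp).$$
The first is obtained by transferring $E_t$ conformally via $h^{-1}$ to an arc-like subset $\alpha_t\subset\partial\D$ containing $\tau$, using the basic identity $\omega(0,\alpha,\D)=|\alpha|/(2\pi)$ together with the fact that $p_t$ lies in the sub-region bounded by $\alpha_t$ and the cross-cut $h^{-1}(E_t\cap\Omega)$. The second is a reverse bound made possible by the real symmetry of $\Omega^\sharp$: the Koenigs pre-image $(h^\sharp)^{-1}(E_t^\sharp)$ is a set symmetric about the real axis of $\D$ carrying the real point $p_t^\sharp$, which reduces the estimate to a direct one-dimensional planar comparison.

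The central geometric step is the symmetrisation inequality
$$\omega(0,E_t,\Omega)\le\omega(0,E_t^\sharp,\Omega^\sharp).$$
Since the reference point $0$ lies on the axis of symmetrisation (the real axis) and $E_t$ is chosen so that its Steiner rearrangement is precisely $E_t^\sharp$ (intersecting $\Omega$ with $\{\operatorname{Re}w<t\}$ does not alter vertical slices), this inequality falls in the classical framework of Dubinin's polarisation/Steiner-symmetrisation inequalities for harmonic measure from a point on the axis \cite{Dub}: one derives it by approximating the Steiner symmetrisation by iterated polarisations across horizontal lines and applying the polarisation inequality at each step. Concatenating the three estimates yields $|p_t-\tau|\le 2\pi\cdot 2\,(1-p_t^\sharp)=4\pi\,|p_t^\sharp-\tau^\sharp|$.

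The main obstacle is the matching of the constants in the one-sided bounds. Naive choices of $E_t$ (e.g.\ just the vertical cross-cut $\Omega\cap\{\operatorname{Re}w=t\}$) do not yield a harmonic measure of the same order as $|p_t-\tau|$ -- already in a horizontal strip the two quantities decay at different exponential rates, so the constants in the two one-sided bounds would not combine to $4\pi$. The delicate work lies in choosing $E_t$ (a boundary set near the prime end at infinity, refined by information from the Koenigs map and the convex-in-positive-direction geometry of $\Omega$) so that both (i) the transfer to an arc on $\partial\D$ containing $\tau$ is tight enough to yield the constant $2\pi$ in the upper bound, and (ii) the real symmetry of $\Omega^\sharp$ yields the constant $2$ in the reverse bound. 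Once such an $E_t$ is pinned down, the Steiner-symmetrisation inequality is the standard glue that produces the claimed estimate $|\phi_t(0)-\tau|\le 4\pi\,|\phi_t^\sharp(0)-\tau^\sharp|$.
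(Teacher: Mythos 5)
Your plan has the correct overall shape, which does match the paper: normalize the Denjoy--Wolff points, bound $|\phi_t(0)-\tau|$ from above by a multiple of a harmonic measure in $\Omega$, transfer that harmonic measure to $\Omega^\sharp$ via a Steiner-symmetrization inequality, and then bound the result above by a multiple of $|\phi_t^\sharp(0)-\tau^\sharp|$. Your symmetrization step is also essentially right, though the paper invokes Baernstein's star-function result for Steiner symmetrization (Theorem~\ref{Bae} here, applied to the cross-cut $\Gamma_t=\partial\Omega_t\cap\{\mathrm{Re}\,z=t\}$ of the component $\Omega_t$ of $\Omega\cap\{\mathrm{Re}\,z<t\}$ containing~$0$) rather than assembling it from iterated polarizations as you suggest; both routes are legitimate.

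The genuine gap is that you have left open exactly the two bounds you call ``delicate,'' and these are the substance of the argument. For the upper bound, the mechanism is not an arc-length identity on $\partial\D$ applied to some boundary set $E_t$: the paper works with the \emph{interior} curve $A_t=\{\phi_s(0):s\ge t\}=h^{-1}([t,\infty))$, applies the FitzGerald--Rodin--Warschawski/Solynin diameter estimate (Theorem~\ref{diam}) to get $|\phi_t(0)-1|\le 2\pi\,\omega(0,A_t,\D\setminus A_t)$, then uses conformal invariance and the maximum principle to pass to $\omega(0,\Gamma_t,\Omega_t)$. You do not mention the diameter estimate, and without it the constant $2\pi$ does not fall out. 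For the reverse bound, your claim that real symmetry of $\Omega^\sharp$ ``reduces the estimate to a direct one-dimensional planar comparison'' understates the work considerably. After Baernstein's inequality the paper must still (i) reflect $\Omega_t^\sharp$ across $\{\mathrm{Re}\,z=t\}$ to form the doubly symmetric domain $\widetilde{\Omega_t}$, where both $\Gamma_t^\sharp$ and $[t,\infty)$ are geodesics, (ii) compute the harmonic measure of the geodesic cross-cut explicitly in $\D$, (iii) verify two separate calculus inequalities relating $\tan^{-1}(\tfrac{2r}{1-r^2})$, $\tan^{-1}(\tfrac{1-r}{2\sqrt r})$ and $1-r$, which is where the factor~$2$ (hence $4\pi$ overall) comes from, and (iv) use the slit formula \eqref{slit} together with another maximum-principle comparison in $\Omega^\sharp$ to reach $1-\phi_t^\sharp(0)$. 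None of this is present in your proposal, and the concern you raise yourself about strips decaying at mismatched rates is exactly the worry these manoeuvres are designed to defuse; as written the plan does not yet constitute a proof.
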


\medskip

\begin{rmk}\label{R1}
	One can combine Theorem \ref{T1} with the domain monotonicity inequality (\ref{i4})
	to obtain a variety of results for the rate of convergence of semigroups by imposing geometric conditions on the Koenigs domain. For instance,  suppose that $\Omega$ lies in the half-plane $\{z:{\rm Re} z>-1\}$ and that for every $x>-1$, we have $\ell_\Omega(x)<c(x+1)$, where $c$ is a positive constant. Then $\Omega^\sharp$ is contained in the angular domain
	$$
	\Omega^\circ\vcentcolon=\left\{x+iy:x>-1,\;|y|<\frac{c(x+1)}{2}\right\}.		
	$$
	Consider the semigroup $(\phi^\circ_t)$ corresponding to $\Omega^\circ$, having Denjoy-Wolff point $\tau^\circ$ and Keonigs function $h^\circ$.
	By (\ref{i4}) and Theorem \ref{T1},
	\begin{equation}
	|\phi_t(0)-\tau|\leq K\; |\phi^\circ_t(0)-\tau^\circ|,\;\;\;\;\;t\ge 0.
	\end{equation}
	The rate of convergence for  $(\phi^\circ_t)$ has been studied e.g. in \cite{BCD}. 
\end{rmk}

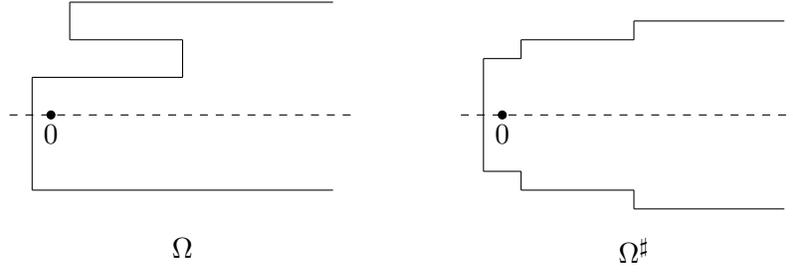
\begin{figure}
\centering
\begin{tikzpicture}

\begin{scope}
\draw[dashed] (-0.3,0) -- (4.3,0);

\draw (0,0.5) -- (0,-1);
\draw (0,-1) -- (4,-1);

\draw (0,0.5) -- (2,0.5);
\draw (2,0.5) -- (2,1);
\draw (2,1) -- (0.5,1);
\draw (0.5,1) -- (0.5,1.5);
\draw (0.5,1.5) -- (4,1.5);

\filldraw[black] (.25,0) circle (1.5pt) node[below] {0};
\node[below] at (2,-1.5) {$\Omega$};
\end{scope}

\begin{scope}[xshift=6cm]
\draw[dashed] (-0.3,0) -- (4.3,0);

\draw (0, 0.75) -- (0, -0.75);
\draw (0, 0.75) -- (0.5, .75);
\draw (0, -0.75) -- (0.5, -.75);

\draw (0.5,0.75) -- (0.5,1);
\draw (0.5,-0.75) -- (0.5,-1);
\draw (0.5,1) -- (2,1);
\draw (0.5,-1) -- (2,-1);

\draw (2,1) -- (2,1.25);
\draw (2,-1) -- (2,-1.25);
\draw (2,1.25) -- (4,1.25);
\draw (2,-1.25) -- (4,-1.25);

\filldraw[black] (.25,0) circle (1.5pt) node[below] {0};
\node[below] at (2,-1.5) {$\Omega^\sharp$};
\end{scope}
\end{tikzpicture}
\label{symmetrization figure}
\caption{A domain $\Omega$ convex in the positive direction and its symmetrization $\Omega^\sharp$.}
\end{figure}

Our second result is a similar theorem for polarization. This is a  geometric transformation defined as follows. 
Let $\Ha$ be the open upper half-plane, and $\mathbb{F}=\Ha^c$ the closed lower half-plane. For a set $A\subset \C$, we define 
\[
A^*=\{\overline{z}\colon z\in A\}, \quad A^-=A\cap \mathbb{F},\quad A^+=A\cap \overline{\Ha}.
\]

The \emph{polarization} of $A$ (with respect to the real axis) is defined as
\[
\widehat{A}\vcentcolon=\left(A\cup A^*\right)^+\cup \left(A\cap A^*\right)^-.
\]

\medskip

See Figure \ref{polarization figure} for an example of domain polarization. We will see (Lemma \ref{T2L2}) that if $D$ is a domain that is convex in the positive direction, then its polarization $\widehat{D}$ is also a domain, convex in the positive direction. So, starting from a non-elliptic semigroup, we can consider its polarized version and prove the following theorem.

\begin{theo}\label{T2}
	Let $(\phi_t)$ be a non-elliptic semigroup in $\D$ with Denjoy-Wolff point $\tau$ and Koenigs domain $\Omega$. Let $\widehat{\Omega}$ be the polarization of $\Omega$.  Consider the semigroup $(\widehat{\phi}_t)$ in $\D$ having 
	Denjoy-Wolff point $\widehat{\tau}$ and Koenigs domain $\widehat{\Omega}$. Then, for every $t\ge 0$,
	\begin{equation}
	|\phi_t(0)-\tau|\leq 2\pi\,|\widehat{\phi}_t(0)-\widehat{\tau}|.
	\end{equation} 
\end{theo}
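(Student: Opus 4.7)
The plan is to follow the same template as for Theorem~\ref{T1}, with polarization playing the role of Steiner symmetrization and the classical pointwise polarization inequality for harmonic measure replacing its Steiner-symmetrization analogue. The improved constant $2\pi$ (as opposed to $4\pi$) is expected because polarization monotonicity for harmonic measure holds pointwise, while the Steiner version is usually obtained as an integrated or limiting consequence and carries an extra factor.

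Fix Koenigs functions $h\colon \D\to \Omega$ and $\widehat{h}\colon \D\to\widehat{\Omega}$ normalized so that $h(0)=\widehat{h}(0)=0$, whence $\phi_t(0)=h^{-1}(t)$ and $\widehat{\phi}_t(0)=\widehat{h}^{-1}(t)$. Since $0\in \Omega\cap\R$ and the real axis is the axis of polarization, $0\in\widehat{\Omega}$; Lemma~\ref{T2L2} then guarantees that $\widehat{\Omega}$ is convex in the positive direction, so $[0,+\infty)\subset\widehat{\Omega}$ and this ray determines the prime end corresponding to $\widehat{\tau}$. The first analytic ingredient is a two-sided ``harmonic-measure dictionary''
\begin{equation}\label{dict}
c_1\,\omega(\Omega,t,E_t)\;\leq\;|\phi_t(0)-\tau|\;\leq\;c_2\,\omega(\Omega,t,E_t),
\end{equation}
where $E_t\subset\partial\Omega$ is the prime-end image under $h$ of an arc of $\partial\D$ that avoids a fixed neighbourhood of $\tau$. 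Such a lemma (which is presumably the backbone of the paper's proof of Theorem~\ref{T1}) follows by conformal invariance of harmonic measure, standard Poisson-kernel estimates in $\D$, and the non-tangential convergence $\phi_t(0)\to\tau$ that is automatic for non-elliptic semigroups. The same dictionary holds for $(\widehat{\phi}_t,\widehat{\Omega})$ at the point $t$ with the same universal constants.

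The heart of the argument is then the classical polarization inequality for harmonic measure, valid precisely when the pole lies on the axis of polarization: for any Borel $E\subset\partial\Omega$ and any $t\in\R$,
\begin{equation}\label{polineq}
\omega(\Omega,t,E)\;\leq\;\omega(\widehat{\Omega},t,\widehat{E}).
\end{equation}
Chaining \eqref{dict} applied to $\Omega$ (upper estimate), then \eqref{polineq} with $E=E_t$, then \eqref{dict} applied to $\widehat{\Omega}$ (lower estimate) yields
\[
|\phi_t(0)-\tau|\;\leq\;c_2\,\omega(\Omega,t,E_t)\;\leq\;c_2\,\omega(\widehat{\Omega},t,\widehat{E}_t)\;\leq\;\tfrac{c_2}{c_1}\,|\widehat{\phi}_t(0)-\widehat{\tau}|,
\]
and a careful choice of $E_t$ should give $c_2/c_1=2\pi$.

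The main obstacle is the prime-end bookkeeping: one must check that the polarization of the boundary set $E_t\subset\partial\Omega$ coincides with the analogous set $\widehat{E}_t\subset\partial\widehat{\Omega}$ of the polarized domain — equivalently, that polarization commutes with selecting the prime end at $+\infty$ on a positive-direction-convex domain. This is where Lemma~\ref{T2L2} is decisive: positive-direction convexity of $\widehat{\Omega}$ ensures that in both $\Omega$ and $\widehat{\Omega}$ the forward flow line through $t$ is eventually interior to the domain, so the ``right-infinity'' prime end is unambiguous and the relevant harmonic measure on $\widehat{\Omega}$ is genuinely comparable to $|\widehat{\phi}_t(0)-\widehat{\tau}|$. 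Fine-tuning the constants in \eqref{dict} so that their ratio is exactly $2\pi$ is a secondary but non-trivial bookkeeping step.
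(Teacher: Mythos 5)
The template you sketch does not match what the polarization theory actually delivers, and two of its three ingredients are not available in the form you need.

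First, the ``dictionary'' \eqref{dict} with a fixed boundary arc $E_t\subset\partial\Omega$ and universal constants $c_1,c_2$ does not hold. Transporting to the disk, $\omega(\Omega,t,E_t)=\omega(\D,\phi_t(0),\partial\D\setminus I_\tau)$ for a fixed arc $I_\tau$ around $\tau$, and the Poisson kernel gives $\omega(\D,z,\partial\D\setminus I_\tau)\asymp 1-|z|$ as $z\to\tau$. This is comparable to $|z-\tau|$ only for non-tangential approach. For parabolic semigroups of zero hyperbolic step the orbit $\phi_t(0)$ converges to $\tau$ tangentially, so $|\phi_t(0)-\tau|$ can be much larger than $1-|\phi_t(0)|$, and the upper inequality $|\phi_t(0)-\tau|\leq c_2\,\omega(\Omega,t,E_t)$ fails. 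This is precisely why the paper replaces your pointwise Poisson bound with the diameter estimate (Theorem~\ref{diam}): $|\phi_t(0)-\tau|\leq d_t\leq 2\pi\,\omega(0,A_t,\D\setminus A_t)$, where $A_t=\{\phi_s(0):s\geq t\}$ is the whole orbit tail, treated as a slit and with the pole fixed at $0$, not at $\phi_t(0)$.

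Second, the ``classical polarization inequality'' \eqref{polineq} you invoke — $\omega(\Omega,t,E)\leq\omega(\widehat{\Omega},t,\widehat{E})$ for arbitrary Borel $E\subset\partial\Omega$ — is not what Solynin's theorem provides, and it is not even well posed: the polarization $\widehat{E}$ of a boundary arc of $\Omega$ need not be a subset of $\partial\widehat{\Omega}$ (a piece of $\partial\Omega$ in the upper half-plane can land in the interior of $\widehat{\Omega}$). The result actually available (Theorem~\ref{pol}, a simplified version of \cite[Theorem 2]{Sol}) concerns a closed set $A\subset(0,\infty)$ on the axis of polarization, viewed as a slit: $\omega(0,A,\Omega\setminus A)\leq\omega(0,A,\widehat{\Omega}\setminus A)$. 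Crucially, $A$ is invariant under polarization, so there is no ``$\widehat{A}$'' bookkeeping to do. The paper applies this with $A=[t,\infty)$ and then transports back to the disk by conformal invariance, obtaining $\omega(0,\widehat{A}_t,\D\setminus\widehat{A}_t)$.

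Finally, the conversion from harmonic measure back to $|\widehat{\phi}_t(0)-\widehat{\tau}|$ in the paper is not a second appeal to a dictionary: it uses \cite[Theorem~1.1]{BK} to show the orbit tail $\widehat{A}_t$ lies beyond the geodesic $\widehat{\Gamma}_t$ through $\widehat{\phi}_t(0)$ perpendicular to the real diameter, giving $\omega(0,\widehat{A}_t,\D\setminus\widehat{A}_t)\leq\omega(0,\widehat{\Gamma}_t,\D_t)=\tfrac{2}{\pi}\tan^{-1}\bigl(\tfrac{1-r^2}{2r}\bigr)$, followed by the calculus bound $\tan^{-1}\bigl(\tfrac{1-r^2}{2r}\bigr)\leq\tfrac{\pi}{2}(1-r)$ and the geometric fact $1-r=\mathrm{dist}(\widehat{\Gamma}_t,1)\leq|\widehat{\phi}_t(0)-1|$. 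The constant $2\pi$ traces to the diameter estimate and this arctangent inequality, not to a ``pointwise versus integrated'' distinction between polarization and Steiner symmetrization. Your heuristic captures the family resemblance between the two proofs, but the missing ingredients — the diameter estimate, the slit form of Solynin's theorem, and the geodesic localization from \cite{BK} — are exactly what make the argument go through.
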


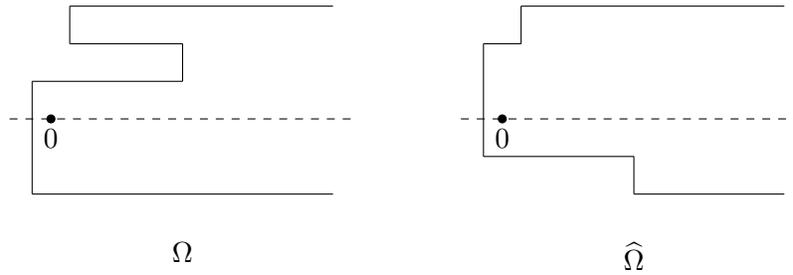
\begin{figure}[h]
\centering
\begin{tikzpicture}
\begin{scope}
\draw[dashed] (-0.3,0) -- (4.3,0);

\draw (0,0.5) -- (0,-1);
\draw (0,-1) -- (4,-1);

\draw (0,0.5) -- (2,0.5);
\draw (2,0.5) -- (2,1);
\draw (2,1) -- (0.5,1);
\draw (0.5,1) -- (0.5,1.5);
\draw (0.5,1.5) -- (4,1.5);

\filldraw[black] (.25,0) circle (1.5pt) node[below] {0};
\node[below] at (2,-1.5) {$\Omega$};
\end{scope}

\begin{scope}[xshift = 6cm]
\draw[dashed] (-0.3,0) -- (4.3,0);

\draw (0,1) -- (0,-.5);
\draw (0,1) -- (0.5,1);
\draw (0,-.5) -- (2,-.5);
\draw (2,-0.5) -- (2,-1);
\draw (.5,1) -- (.5,1.5);
\draw (0.5,1.5) -- (4,1.5);
\draw (2,-1) -- (4,-1);

\filldraw[black] (.25,0) circle (1.5pt) node[below] {0};
\node[below] at (2,-1.5) {$\widehat{\Omega}$};
\end{scope}

\end{tikzpicture}
\label{polarization figure}
\caption{A domain $\Omega$ convex in the positive direction and its polarization $\widehat{\Omega}$.}
\end{figure}

The proofs of the theorems are based on estimates for harmonic measure. 	
A short review of the basic facts about harmonic measure is presented in Section 2.
The proof of Theorems \ref{T1} and \ref{T2} are  in Sections 3 and 4, respectively.


\section{{\bf Preliminaries for Harmonic Measure}}
 Let $\Omega\subset\mathbb{C}$ be a domain with non-polar boundary and let $E$ be a Borel subset of $\partial\Omega$. The harmonic measure of $E$ with respect to $\mathbb{D}$ is the solution of the generalized Dirichlet problem for the Laplacian in $\Omega$ with boundary function $\chi_E$; see \cite[Theorem 4.2.6]{Ran}. For $z\in\Omega$, we are going to use the notation $\omega(z,E,\Omega)$ for the harmonic measure of $E$ with respect to $\Omega$, evaluated at $z\in\Omega$.  
 
 \medskip
 
There are several properties of harmonic measure that will be of use. First of all, the harmonic measure is a conformal invariant. Moreover, it satisfies the following monotonicity property: if $\Omega_1\subset\Omega_2$ are two domains with non-polar boundaries, $E\subset\partial\Omega_1\cap\partial\Omega_2$ is Borel and $z\in\Omega_1$, then $\omega(z,E,\Omega_1)\le\omega(z,E,\Omega_2)$. In addition, for fixed $z\in \Omega$, the harmonic measure  $\omega(z,\cdot,\Omega)$ is a Borel probability measure on $\partial\Omega$.

\medskip

By the Poisson formula, if $E$ is a Borel set on the unit circle, then $\omega(0,E,\D)=\frac{\ell(E)}{2\pi}$, where $\ell$ is the length measure. Using this fact and  conformal invariance, we may compute the harmonic measure in some symmetric situations. For example, for $0<r<1$,
\begin{equation}\label{slit}
\omega(0,[r,1],\D\setminus [r,1])=\frac{2}{\pi}\tan^{-1}\frac{1-r}{2\sqrt{r}}.
\end{equation} 

\medskip

The behaviour of harmonic measure under symmetrization was studied by A. Baernstein.
The main theorem \cite[Theorem 7]{Ba74} involves circular symmetrization, but, as Baernstein remarks in \cite{Ba02}, his star-function method can be adapted to the Steiner symmetrization setting and yields the following theorem; see also \cite{Sol2}, \cite{Ba19}.

\begin{theo}\label{Bae}
Let $\Omega$ be a domain with $0\in \Omega\subset \{z\in\C: {\rm Re}\,z<t\}$, $t\in\R$. Set $L=\{z\in \partial \Omega: {\rm Re}\, z=t\}$. Then
$$
\omega(0,L,\Omega)\leq 	\omega(0,L^\sharp,\Omega^\sharp).
$$
\end{theo}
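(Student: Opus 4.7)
The plan is to adapt Baernstein's star-function method to the setting of Steiner symmetrization. Set $H_t := \{z : {\rm Re}\, z < t\}$ and extend the harmonic measures to subharmonic functions on $H_t$: set $u(z) := \omega(z, L, \Omega)$ extended by zero on $H_t \setminus \Omega$, and $v(z) := \omega(z, L^\sharp, \Omega^\sharp)$ extended by zero on $H_t \setminus \Omega^\sharp$. By construction $v$ is Steiner-symmetric (symmetric in the real axis and decreasing in the vertical direction on each vertical slice, since $\Omega^\sharp$ and the boundary datum $\chi_{L^\sharp}$ are Steiner-symmetric). The goal is $u(0) \leq v(0)$.

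For $x+iy$ with $x < t$, $y > 0$, introduce the Baernstein star functions
\[
U(x+iy) := \sup_{|E|=2y} \int_E u(x+is)\, ds = \int_{-y}^{y} u^{\sharp}(x+is)\, ds, \qquad V(x+iy) := \int_{-y}^{y} v(x+is)\, ds,
\]
where $u^{\sharp}(x+is)$ is the symmetric decreasing rearrangement of $s \mapsto u(x+is)$ on $\R$; because $v$ is Steiner-symmetric, the analogous supremum in the definition of $V$ is attained at $E = [-y,y]$. The core of Baernstein's method, adapted to vertical rearrangements as indicated in \cite{Ba02} and fully carried out in \cite{Sol2}, \cite{Ba19}, shows that $U - V$ is subharmonic on the quadrant $\mathcal{R} := \{x+iy : x < t,\ y > 0\}$: indeed, $U$ is subharmonic on $\mathcal{R}$ by Baernstein's star-function theorem, while $V$ is harmonic on $\mathcal{R} \cap \Omega^\sharp$ (a direct calculation using harmonicity and $y$-symmetry of $v$ gives $\Delta V = 0$ there), and the global subharmonicity of $U - V$ follows from the specific Steiner-symmetric structure of $v$.

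I would then verify the boundary inequalities on $\partial \mathcal{R}$: on $\{y = 0\}$ both $U$ and $V$ vanish; as $x \to t^-$ both tend to $\min(2y, |L|)$, since Steiner symmetrization preserves the length of each vertical slice and hence $|L^\sharp| = |L|$; and decay as $x \to -\infty$ is handled by first exhausting $\Omega$ through bounded subdomains. The maximum principle then gives $U \leq V$ throughout $\mathcal{R}$. Finally, to pass from this integrated comparison to the pointwise inequality at $z = 0 \in \Omega^\sharp$, divide by $y$ on the line $x = 0$ and send $y \to 0^+$: continuity of $v$ at $0$ gives $V(0,y)/y \to 2 v(0)$, whereas $U(0,y)/y \to 2 u^{\sharp}(0) = 2\sup_{s \in \R} u(is) \geq 2 u(0)$. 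Combining these two limits yields $u(0) \leq v(0)$, which is exactly Theorem \ref{Bae}.

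The principal obstacle is the subharmonicity of $U - V$ on $\mathcal{R}$, the heart of Baernstein's method. The original proof for circular symmetrization uses a delicate rearrangement argument on each circle, and adapting it to vertical rearrangements involves comparable technicalities, only sketched in \cite{Ba02}. A secondary but essential point is controlling the behavior on $\partial \mathcal{R}$: justifying the a.e.\ convergence of $u, v$ to $\chi_L, \chi_{L^\sharp}$ along $\{{\rm Re}\, z = t\}$ (standard for harmonic measure modulo a polar exceptional set) and truncating unbounded $\Omega$ before applying the maximum principle rigorously.
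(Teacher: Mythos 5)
The paper does not actually prove Theorem~\ref{Bae}. It is stated as a known result: the authors cite Baernstein's original circular-symmetrization theorem \cite[Theorem 7]{Ba74}, note that Baernstein himself observes in \cite{Ba02} that the star-function method adapts to the Steiner setting, and point to \cite{Sol2} and \cite{Ba19} for details. So there is no ``paper's own proof'' to compare against; the correct reading of the paper is that this theorem is imported from the literature.

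Your sketch is a plausible reconstruction of exactly that argument, and you cite the same sources the authors do. The outline is structurally sound: extend $u$ and $v$ by zero, form the vertical (Steiner) star functions $U$ and $V$ on the quadrant $\{x<t,\,y>0\}$, establish subharmonicity of $U-V$ there, control the boundary, apply the maximum principle, and recover the pointwise inequality at the origin. Two places deserve a word of caution, though, since they are where a full proof would have to earn its keep. First, the subharmonicity of $U-V$ is not just ``$U$ subharmonic plus $V$ harmonic'': $V$ fails to be harmonic on the part of $\mathcal{R}$ lying above the slice of $\Omega^\sharp$, and one must verify that $V$ is in fact superharmonic globally (this is where the Steiner-symmetric, vertically monotone structure of $v$ enters; your phrase ``follows from the specific Steiner-symmetric structure'' is gesturing at the right thing but is precisely the nontrivial content of the adapted star-function lemma). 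Second, your passage from $U\le V$ to $u(0)\le v(0)$ via $y\to0^+$ is correct but relies on the chain $u(0)\le \operatorname{ess\,sup}_s u(is)=u^\sharp(0^+)\le v(0)$; the first inequality uses that $0\in\Omega$ so $u$ is continuous there, and the last that $v$ is continuous at $0\in\Omega^\sharp$, which you should note follows from $\ell_\Omega(0)>0$. The boundary behavior as $x\to t^-$ and the exhaustion argument for unbounded $\Omega$ are also only sketched, but these are standard. In short, your proposal matches the approach the paper attributes to the cited references, with the central technical step (the Steiner star-function lemma) deferred to those references, exactly as the paper itself does.
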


The following diameter estimate for harmonic measure  was proved in \cite{FRW} and
\cite{Sol2}.

\begin{theo}\label{diam} 
Let $E$ be a Jordan arc in the closure of the unit disk. Let  $d$ be the diameter of $E$.  Let $E_d$ be an arc on the unit circle which
has the same diameter as $E$ (if $d=2$, we take $E_d$ to be a half-circle). Then
\begin{equation}
\omega(0,E,\D\setminus
E)\geq\omega(0,E_d,\D)=\frac{1}{\pi}\sin^{-1}\,\frac{d}{2}.
\end{equation}
\end{theo}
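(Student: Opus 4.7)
The strategy is to identify the boundary arc $E_d$ as the extremal (minimizing) configuration for $\omega(0, \cdot, \D\setminus \cdot)$ among Jordan arcs in $\overline{\D}$ of diameter $d$, and then evaluate the extremum directly. The equality $\omega(0, E_d, \D) = \frac{1}{\pi}\sin^{-1}(d/2)$ follows immediately from the Poisson formula: an arc on $\partial\D$ with chord length $d$ subtends a central angle of $2\sin^{-1}(d/2)$, hence $\omega(0, E_d, \D) = \frac{1}{2\pi}\cdot 2\sin^{-1}(d/2)$.

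To prove $\omega(0, E, \D\setminus E) \geq \omega(0, E_d, \D)$, I would fix two points $p, q \in E$ with $|p - q| = d$ and rotate so that they are symmetric about the real axis. Then I would apply a sequence of polarizations with respect to lines that preserve the pair $\{p, q\}$ (so that $\mathrm{diam}(E) \geq d$ is maintained throughout). Via a Baernstein-type monotonicity in the spirit of Theorem \ref{Bae}, the harmonic measure $\omega(0, \cdot, \D\setminus \cdot)$ should be non-increasing at each step. Passing to a Hausdorff limit drives $E$ toward a configuration on $\partial\D$; by extremality, this limit contains an arc with chord length $d$, and the monotonicity of harmonic measure with respect to the set then yields the desired bound.

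The main obstacle is the monotonicity step itself. Theorem \ref{Bae} and related Baernstein-type results address the harmonic measure of an outer boundary set of a symmetrized ambient domain; here $E$ plays the role of an interior slit whose complement is what is being studied. Adapting the star-function or polarization machinery to this setting requires care, in particular to verify that polarization (rather than Steiner symmetrization) can be performed while keeping $0$ in the complement and the pair $\{p, q\}$ intact, and then to identify the Hausdorff limit as a set containing $E_d$ via a compactness-plus-rigidity argument. A secondary subtlety is the behaviour near the boundary of $\D$ when $E$ already touches $\partial\D$, where the harmonic measure picks up a boundary contribution that must be reconciled with the symmetrization procedure.
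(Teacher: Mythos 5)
The paper does not prove Theorem~\ref{diam}; it quotes it as a known result from \cite{FRW} and \cite{Sol2} and uses it as a black box in the proofs of Theorems~\ref{T1} and~\ref{T2}. So there is no in-paper argument to compare against, and your plan has to be judged on its own. Your evaluation of the equality $\omega(0,E_d,\D)=\tfrac{1}{\pi}\sin^{-1}(d/2)$ via the Poisson formula is correct, and the broad strategy of pushing $E$ toward $\partial\D$ by polarization while controlling the diameter is a reasonable one, in the spirit of Solynin's cited work.

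The plan as written, however, has a structural problem rather than merely a loose end. To keep $\widehat E\subset\overline\D$ you must polarize across lines through $0$ (so reflection preserves $\overline\D$); to keep $0\notin\widehat E$ you need $0$ on the polarization line or on the unfavourable side; and to keep $\operatorname{diam}E\ge d$ you require reflection to preserve $\{p,q\}$, which you have normalized to be symmetric about $\R$. In general position the only line through $0$ that preserves $\{p,q\}$ is $\R$ itself (with at most one more line in the degenerate collinear case), so you have a one- or two-element family, not a sequence of polarizations whose Hausdorff limit could plausibly be a boundary arc. Dropping the $\{p,q\}$-preservation condition is possible, but then you must track how the diameter-realizing pair moves under each polarization, and you have supplied no argument for that. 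On top of this, the monotonicity step needs a polarization inequality for the slit configuration $\omega(0,E,\D\setminus E)$; neither Theorem~\ref{Bae} (Steiner symmetrization of the ambient domain) nor Theorem~\ref{pol} (a slit on the polarization axis) is directly applicable, as you yourself note, and no substitute is identified. Finally, the limiting step --- lower semicontinuity of $E\mapsto\omega(0,E,\D\setminus E)$ along the polarization sequence and identification of the limit set as a boundary arc of diameter $\ge d$ --- is asserted, not argued. Each of these is a genuine gap that would require a new idea, not routine detail-filling; you would do better to consult \cite{FRW} or \cite{Sol2} directly, where the result is proved in full.
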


The behaviour of harmonic measure under polarization was studied by A. Yu. Solynin \cite{Sol}. We state here a simplified version of \cite[Theorem 2]{Sol}; we will need it in the proof of Theorem \ref{T2}. 

\begin{theo}\label{pol} 
Let $\Omega\subset\mathbb{C}$ be a domain containing the half-line $[0,\infty)$. Let $A\subset (0,\infty)$ be a non-polar, closed set. Then
$$
\omega(0,A,\Omega\setminus A)\leq 	\omega(0,A,\widehat{\Omega}\setminus A).
$$
\end{theo}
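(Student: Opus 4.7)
The plan is to construct an auxiliary subharmonic function $\widehat u$ on $\widehat\Omega\setminus A$ from the harmonic measure $u(z):=\omega(z,A,\Omega\setminus A)$ with $\widehat u(0)=u(0)$ and boundary values bounded above by those of $q(z):=\omega(z,A,\widehat\Omega\setminus A)$, and then conclude via the maximum principle. First, I extend $u$ by zero outside $\Omega$; as a non-negative continuous function that is harmonic on $\Omega\setminus A$, equals $1$ on $A$, and vanishes on $\partial\Omega$, it is subharmonic on $\C\setminus A$ (after a routine approximation if $\partial\Omega$ is irregular). Let $v(z):=u(\bar z)$, which is the zero-extension of the harmonic measure for the reflected configuration $\Omega^*\setminus A$, and is likewise non-negative and subharmonic on $\C\setminus A$. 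Because $A\subset\R$, we have $u\equiv v$ on $\R$, and in particular $u(0)=v(0)$.

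Define the polarized function
\[
\widehat u(z):=\begin{cases}\max\{u(z),v(z)\} & \operatorname{Im}z\ge 0,\\ \min\{u(z),v(z)\} & \operatorname{Im}z<0.\end{cases}
\]
Then $\widehat u$ is continuous across $\R$ (since $u=v$ there), $\widehat u(0)=u(0)$, and $\widehat u=1$ on $A$. The boundary condition $\widehat u=0$ on $\partial\widehat\Omega\setminus A$ follows from a short case analysis: at $z\in\partial\widehat\Omega\cap\overline{\Ha}$ we have $z\in\partial(\Omega\cup\Omega^*)$, so $z\notin\Omega\cup\Omega^*$ and hence $u(z)=v(z)=0$; at $z\in\partial\widehat\Omega\cap\mathbb F$ we have $z\in\partial(\Omega\cap\Omega^*)$, so $z$ fails to lie in at least one of $\Omega,\Omega^*$, whence $\min\{u(z),v(z)\}=0$.

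The main obstacle is to verify that $\widehat u$ is subharmonic on $\widehat\Omega\setminus A$. On the open upper half-plane part this is immediate, since $\widehat u=\max\{u,v\}$ is the maximum of two subharmonic functions. On the open lower half-plane part, $\widehat u=\min\{u,v\}$ is a priori only superharmonic (as the minimum of two harmonic functions), so the max/min interchange across $\R$ must be exploited. The technical heart of the proof, which is exactly the polarization principle for subharmonic functions proved in \cite{Sol}, is a distributional computation: the non-negative singular contribution to $\Delta\widehat u$ produced along $\R\cap\widehat\Omega$ by the max-to-min transition exactly compensates the non-positive contribution of $\Delta\min\{u,v\}$ in the open lower half. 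Equivalently, one verifies the submean-value inequality on small disks straddling $\R$ using the identity $\max+\min=u+v$ and the harmonicity of $u+v$ on $\Omega\cap\Omega^*$. This step, rather than the global setup, is where I expect nearly all the work to lie.

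Once subharmonicity of $\widehat u$ is established, the maximum principle applied to $\widehat u - q$ (subharmonic on $\widehat\Omega\setminus A$ with non-positive limits at every boundary point) yields $\widehat u\le q$ on $\widehat\Omega\setminus A$. Evaluating at the origin gives
\[
\omega(0,A,\Omega\setminus A)=u(0)=\widehat u(0)\le q(0)=\omega(0,A,\widehat\Omega\setminus A),
\]
which is the desired inequality.
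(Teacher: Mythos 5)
The paper itself does not give a proof of this theorem; it is quoted as a simplified version of \cite[Theorem 2]{Sol}, and the reader is referred to Solynin's paper. So your proposal is being measured against Solynin's argument rather than against anything in the paper, and unfortunately the central claim you make does not hold.

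The gap is your assertion that $\widehat u$ is subharmonic on $\widehat\Omega\setminus A$. In the open lower half-plane, $\widehat\Omega\cap\mathbb F=(\Omega\cap\Omega^*)^-$, and on that set both $u$ and $v$ are genuinely harmonic (you are away from $\partial\Omega$, $\partial\Omega^*$ and $A\subset\R$). Hence $\min(u,v)$ is \emph{super}harmonic there, with strictly negative distributional Laplacian supported on the nodal set $\{u=v\}\cap(\Omega\cap\Omega^*)^-$. That nodal set generally does reach into the open lower half-plane: $u-v$ is a harmonic function on $\Omega\cap\Omega^*\setminus A$, odd under conjugation, and its boundary values on $\partial(\Omega\cap\Omega^*)\cap\Ha$ have both signs as soon as $\partial\Omega$ and $\partial\Omega^*$ each contribute pieces of $\partial(\Omega\cap\Omega^*)$ in $\Ha$ (e.g.\ a disc with one slit in the upper half and another in the lower half); an interior nodal arc then appears in $\Ha$ and, by oddness, in $\mathbb F$ as well. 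At points on that arc in $\mathbb F$, small centred circles lie entirely in $\mathbb F$, and the sub-mean-value inequality for $\widehat u=\min(u,v)$ fails (it holds with the wrong sign). Your compensating mechanism along $\R$ does not rescue this: at a point $x_0\in\R$ where $\partial_y u(x_0)\ne\partial_y v(x_0)$, your polarized function is in fact \emph{locally equal} to $u$ (or to $v$) on both sides of $\R$ — $\max$ above and $\min$ below both pick the same function — so $\widehat u$ is $C^1$ across $\R$ and $\Delta\widehat u$ carries \emph{no} singular mass on $\R$ at all. Even if it did, subharmonicity is a local, pointwise condition on the Laplacian as a measure; a global ``compensation'' of positive and negative contributions could not make $\widehat u$ subharmonic.

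Your verification of boundary values of $\widehat u$ on $\partial\widehat\Omega$, the identity $u=v$ on $\R$, and the intended maximum-principle conclusion are all fine; the construction itself is the natural first guess. But the heart of the matter is not a one-function lemma of the type ``the polarization of a subharmonic function is subharmonic'' — that statement is false. Solynin's argument (and the standard treatments in Dubinin and Baernstein–Drasin–Laugesen) is a genuine two-function, two-domain comparison: one works simultaneously with $u,v$ and the two polarized harmonic measures $q(\cdot)=\omega(\cdot,A,\widehat\Omega\setminus A)$ and $p(\cdot)=q(\bar\cdot)$, exploits the identities $\widehat\Omega\cup\widehat\Omega^{\,*}=\Omega\cup\Omega^*$ and $\widehat\Omega\cap\widehat\Omega^{\,*}=\Omega\cap\Omega^*$, and compares paired quantities such as $u+v$ versus $q+p$ and $\max(u,v)$ versus $\max(q,p)$ on the appropriate half-domains. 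As it stands, the ``technical heart'' you defer to \cite{Sol} is precisely where your sketch goes wrong, so the proposal does not constitute a proof.
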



\section{{\bf Proof of Theorem \ref{T1}}}\label{ProofT1}

 The definition of the semigroup $(\phi_t^\sharp)$ is based on the fact that the Steiner symmetrization $ \Omega^\sharp$ of $\Omega$ is
convex in the positive direction. We now  prove this fact. We denote by $L_x$ the vertical line through $x\in\R$ and by $\ell$ the length measure. 

\begin{lm}\label{T1L1}
If $D$ is a domain, convex in the positive direction, then the same is true for $D^\sharp$.	
\end{lm}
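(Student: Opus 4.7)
The plan is to reduce the convexity in the positive direction of $D^\sharp$ entirely to a monotonicity property of the slice-length function $x \mapsto \ell_D(x)$, since $D^\sharp$ is defined solely in terms of this function. The key claim I would prove first is: if $D + t \subset D$ for every $t \geq 0$, then $\ell_D$ is non-decreasing on $\R$.

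First, I would verify this monotonicity claim directly from the hypothesis. Fix $x \in \R$ and $t \geq 0$. Horizontal translation by $t$ is an isometry that sends $D \cap L_x$ bijectively onto $(D \cap L_x) + t$, a subset of $L_{x+t}$ having the same one-dimensional Lebesgue measure as $D \cap L_x$; by the hypothesis $D+t\subset D$, this image lies inside $D \cap L_{x+t}$. Hence
\[
\ell_D(x) = \ell\bigl((D \cap L_x)+t\bigr) \leq \ell_D(x+t).
\]
With monotonicity in hand, the desired inclusion is automatic: if $x+iy \in D^\sharp$ and $t \geq 0$, then $|y| < \ell_D(x)/2 \leq \ell_D(x+t)/2$, so $(x+t)+iy \in D^\sharp$, giving $D^\sharp + t \subset D^\sharp$.

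The main obstacle, and the only place where the word \emph{domain} (open and connected) must be checked separately, is to verify that $D^\sharp$ is itself a domain. For openness, I would use the lower semicontinuity of $\ell_D$: given any compact $K \subset D \cap L_x$, the openness of $D$ yields a uniform horizontal $\varepsilon$-neighbourhood around $K$ contained in $D$, so $\ell_D(x') \geq \ell(K)$ whenever $|x'-x|<\varepsilon$; letting $\ell(K)\uparrow\ell_D(x)$ gives lower semicontinuity, and this immediately forces $\{x+iy : |y| < \ell_D(x)/2\}$ to be open. For connectedness, I would invoke the monotonicity from Step 1: the set $\{x \in \R : \ell_D(x) > 0\}$ is a half-line $(a,+\infty)$ (with $a\in[-\infty,+\infty)$), the segment $(a,+\infty) \times \{0\}$ lies inside $D^\sharp$, and every point of $D^\sharp$ can be joined to this segment by the vertical interval through it, which is entirely contained in $D^\sharp$ by symmetry of the slices. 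Hence $D^\sharp$ is a domain, convex in the positive direction.
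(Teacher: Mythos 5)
Your proposal is correct, and it takes a genuinely different and in one respect cleaner route than the paper's. The paper's argument works pointwise: for $z_o=x_o+iy_o\in D^\sharp$ it picks $c$ with $2y_o+\epsilon<c<\ell_D(x_o)$, approximates $L_{x_o}\cap D$ from inside by finitely many closed segments $I_1,\dots,I_n$ of total length $c$, pushes the corresponding closed half-strips into $D$ using convexity in the positive direction, and concludes $\ell_D(x)>c$ for all $x>x_o$. Your observation that the single inclusion $(D\cap L_x)+t\subset D\cap L_{x+t}$ already gives $\ell_D(x)\le\ell_D(x+t)$ bypasses the inner-approximation step entirely: a horizontal translation is measure-preserving, so nothing needs to be discretized. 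Beyond the cleaner monotonicity argument, you also explicitly verify that $D^\sharp$ is open (via lower semicontinuity of $\ell_D$) and connected; the paper silently relies on openness (``Because $D^\sharp$ is open, there exists an $\epsilon>0\dots$'') and does not address connectedness at all, presumably treating these as standard facts about Steiner symmetrization. So your proof is not only valid but more self-contained. One small imprecision: $\{x:\ell_D(x)>0\}$ need not be open — it could be $[a,+\infty)$ rather than $(a,+\infty)$ if $\ell_D$ jumps to a positive value at $a$ — but this does not affect your connectedness argument, since $D^\sharp\cap\R$ is still an interval and every point of $D^\sharp$ reaches it by a vertical segment.
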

\begin{proof}
Let $z_o=x_o+iy_o\in D^\sharp$. We need to show that the horizontal half-line $\{z\in\mathbb{C}\colon \mathrm{Re}\, z >x_o, \; \mathrm{Im}\, z = y_o\}$ lies in $D^\sharp$. Since the domain $D^\sharp$ is symmetric with respect to the real axis, it suffices to assume that $y_o>0$. Because $D^\sharp$ is open, there exists an $\epsilon>0$ so that the closed discs $\overline{D}(z_o,\tfrac{\epsilon}{2})$ and $\overline{D}(\overline{z_o},\tfrac{\epsilon}{2})$ are both contained in $D^\sharp$, which means that
\[
0<2y_o+\epsilon=\left\lvert z_0+i\tfrac{\epsilon}{2} - \left(\overline{z_o}-i\tfrac{\epsilon}{2}\right)\right\rvert <\ell\left(L_{x_o}\cap D\right).
\]
So, for every $c>0$ with $2y_o+\epsilon<c<\ell\left(L_{x_o}\cap D\right)$, we can find closed, vertical segments $I_1,I_2,\dots,I_n$, contained in $L_{x_o}\cap D$, with $\sum_{k=1}^n\ell(I_k)=c$. Write $I_k=[x_o+ia_k, x_o+ib_k]$, for $k=1,2,\dots,n$, where $a_k<b_k$. Since $D$ is convex in the positive direction, the closed half-strips 
\[
\{z\in\mathbb{C}\colon \mathrm{Re}\, z \geq x_o, \; \mathrm{Im}\, z\in [a_k,b_k]\},
\]
are contained in $D$, for all $k=1,2,\dots,n$. Therefore, 
\[
\ell\left(L_x\cap D\right)>c>2y_o+\epsilon,\quad \text{for any}\quad x>x_o,
\]
which implies that $D^\sharp$ contains the half-strip 
\[
\left\{z\in\mathbb{C}\colon \mathrm{Re}\, z >x_o, \; \lvert \mathrm{Im}\, z \rvert < y_o+ \frac{\epsilon}{2}\right\}. \qedhere
\]
\end{proof}

\medskip

We proceed with the proof of Theorem \ref{T1}. From now on, we use the notations $(\phi_t), (\phi_t^\sharp), \Omega, \Omega^\sharp$ etc., set in Theorem \ref{T1}.\\

\medskip

{\it Proof of Theorem \ref{T1}}\\
By conjugating the semigroups $(\phi_t)$ and $(\phi^\sharp_t)$ with appropriate Euclidean rotations about the origin, we may and do assume  that $\tau=\tau^\sharp=1$. So, our goal is to show that for all $t>0$,
\[
\lvert \phi_t(0)-1\rvert \leq 4\pi \lvert \phi^\sharp_t(0)-1\rvert.
\]
Let $h\colon \D\to \Omega$ be the Koenigs function of $(\phi_t)$ and $h^\sharp\colon \D \to\Omega^\sharp$ the Koenigs function of $(\phi^\sharp_t)$. For $t>0$, we set
$$
A_t=\{\phi_s(0):s\ge t\}\;\;\;\hbox{and}\;\;\;A^\sharp_t=\{\phi^\sharp_s(0):s\ge t\}
$$
and note that $h(A_t)=[t,\infty)\subset \Omega$ and $h^\sharp(A^\sharp_t)=[t,\infty)\subset \Omega^\sharp$. Let $d_t$ be the diameter of $A_t$ and $\alpha_t$ be an arc of the unit circle having diameter equal to $d_t$. By Theorem \ref{diam},
\begin{eqnarray}\label{T1p1}
|\phi_t(0)-1|&\leq & d_t=2\,\sin(\pi \omega(0,\alpha_t,\D))\leq 2\pi\,\omega(0,\alpha_t,\D)\\
&\le & 2\pi\,\omega(0,A_t,\D\setminus A_t).\nonumber 
\end{eqnarray}
Also, by the conformal invariance of the harmonic measure,
\begin{equation}\label{T1p2}
\omega(0,A_t,\D\setminus A_t)=\omega(0,[t,\infty),\Omega\setminus [t,\infty)).
\end{equation}
Let $\Omega_t$ be the connected component of $\Omega\setminus \{\mathrm{Re}\ z=t\}$ that contains 0, and $\Gamma_t = \partial \Omega_t \cap \{\mathrm{Re}\ z=t\}$. The maximum principle on the domain $\Omega_t$ yields 
\[
\omega(0,[t,\infty),\Omega\setminus [t,\infty)) \leq \omega(0,\Gamma_t,\Omega_t),
\]
which in conjunction with \eqref{T1p1} and \eqref{T1p2} gives
\[
|\phi_t(0)-1|\leq 2\pi\, \omega(0,\Gamma_t,\Omega_t).
\]
By Theorem \ref{Bae}, 
\[
\omega(0,\Gamma_t,\Omega_t)\leq \omega\left(0,\Gamma_t^\sharp,\Omega_t^\sharp\right),
\]
where $\Gamma_t^\sharp$ and $\Omega_t^\sharp$ denote the Steiner symmetrizations of $\Gamma_t$ and $\Omega_t$, respectively. So, 
\begin{equation}\label{T1p3}
|\phi_t(0)-1|\leq 2\pi\,\omega\left(0,\Gamma_t^\sharp,\Omega_t^\sharp\right).
\end{equation}

Let $\mathcal{R}\Omega_t^\sharp$ denote the reflection of $\Omega_t^\sharp$ in the vertical line $\{\mathrm{Re}\ z=t\}$. Since $\Omega^\sharp$ is convex in the positive direction, we have $\mathcal{R}\Omega_t^\sharp\subset \Omega^\sharp$. We consider the domain $\widetilde{\Omega_t}:=\Omega_t^\sharp\cup \Gamma_t^\sharp\cup \mathcal{R}\Omega_t^\sharp$, which is symmetric with respect to both $\R$ and $\{\mathrm{Re}\ z=t\}$ (see Figure \ref{theorem 1 figure}). 
Therefore, the line segments $\Gamma_t^\sharp$ and $[t,\infty)\cap\widetilde{\Omega_t}$ are hyperbolic geodesic segments of $\widetilde{\Omega_t}$. Choose a Riemann map $g\colon \widetilde{\Omega_t} \to \D$, with $g(0)=0$ and $g\left([t,\infty)\cap\widetilde{\Omega_t}\right) = [r,1)$, for some $r=r(t)\in (0,1)$. Also, define $\gamma_t=g\left(\Gamma_t^\sharp\right)$, which is a hyperbolic geodesic of $\D$, perpendicular to the real axis. By conformal invariance, we have that 
\begin{equation}\label{T1p4}
\omega\left(0, \Gamma_t^\sharp, \Omega_t^\sharp\right) = \omega(0 , \gamma_t, \D\setminus \gamma_t).
\end{equation}

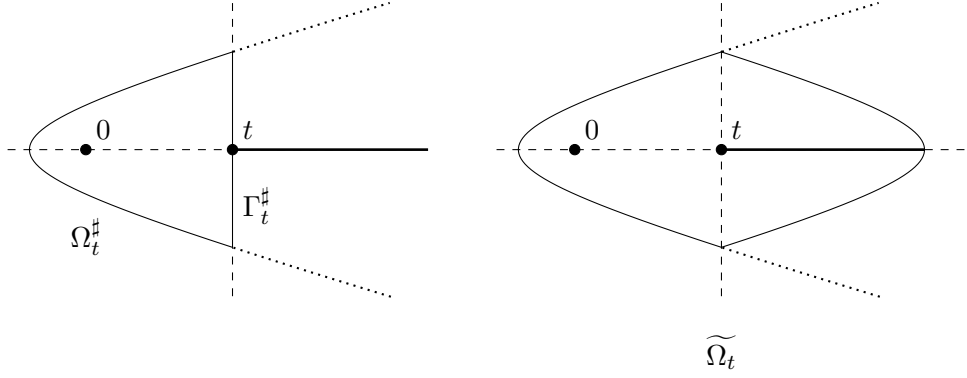
\begin{figure}[h]
\centering
\begin{tikzpicture}[scale=1.3]
\begin{scope}
\draw [samples=50,domain=-0.5925:0.5925,rotate around={0.:(0.,0.)},xshift=0.cm,yshift=0.cm] plot ({1.923347276399214*(1+(\x)^2)/(1-(\x)^2)},{0.5483933390986093*2*(\x)/(1-(\x)^2)});
\draw[dotted, line width=.8pt] [samples=50,domain=0.5925:0.7,rotate around={0.:(0.,0.)},xshift=0.cm,yshift=0.cm] plot ({1.923347276399214*(1+(\x)^2)/(1-(\x)^2)},{0.5483933390986093*2*(\x)/(1-(\x)^2)});
\draw[dotted, line width=.8pt] [samples=50,domain=-0.7:-0.5925,rotate around={0.:(0.,0.)},xshift=0.cm,yshift=0.cm] plot ({1.923347276399214*(1+(\x)^2)/(1-(\x)^2)},{0.5483933390986093*2*(\x)/(1-(\x)^2)});
\draw[dashed] (1.7,0) -- (6,0);
\draw[dashed] (4,1.5) -- (4,1);
\draw[dashed] (4,-1.5) -- (4,-1);
\draw[line width = 1pt] (4,0) -- (6,0);
\draw (4,1) -- (4,-1);
\filldraw[black] (4,0) circle (1.5pt) node[above right] {$t$};
\filldraw[black] (2.5,0) circle (1.5pt) node[above right] {0};
\node[right] at (4,-0.57) {$\Gamma^\sharp_t$};
\node[below] at (2.5,-0.6) {$\Omega^\sharp_t$};
\end{scope}

\begin{scope}[xshift=5cm]
\draw [samples=50,domain=-0.5925:0.5925,rotate around={0.:(0.,0.)},xshift=0.cm,yshift=0.cm] plot ({1.923347276399214*(1+(\x)^2)/(1-(\x)^2)},{0.5483933390986093*2*(\x)/(1-(\x)^2)});
\draw [samples=50,domain=-0.5929:0.5925,rotate around={0.:(8.,0.)},xshift=8.cm,yshift=0.cm] plot ({1.9233472763991835*(-1-(\x)^2)/(1-(\x)^2)},{0.5483933390986007*(-2)*(\x)/(1-(\x)^2)});
\draw[dotted, line width=.8pt] [samples=50,domain=0.5925:0.7,rotate around={0.:(0.,0.)},xshift=0.cm,yshift=0.cm] plot ({1.923347276399214*(1+(\x)^2)/(1-(\x)^2)},{0.5483933390986093*2*(\x)/(1-(\x)^2)});
\draw[dotted, line width=.8pt] [samples=50,domain=-0.7:-0.5925,rotate around={0.:(0.,0.)},xshift=0.cm,yshift=0.cm] plot ({1.923347276399214*(1+(\x)^2)/(1-(\x)^2)},{0.5483933390986093*2*(\x)/(1-(\x)^2)});
\draw[dashed] (1.7,0) -- (6.5,0);
\draw[dashed] (4,1.5) -- (4,-1.5);
\draw[line width = 1pt] (4,0) -- (6.08,0);
\filldraw[black] (4,0) circle (1.5pt) node[above right] {$t$};
\filldraw[black] (2.5,0) circle (1.5pt) node[above right] {0};
\node[below] at (4,-1.8) {$\widetilde{\Omega_t}$};
\end{scope}
\end{tikzpicture}
\label{theorem 1 figure}
\caption{The domains $\Omega^\sharp_t$ and $\widetilde{\Omega_t}$.}
\end{figure}

Using the M\"obius map $f(z)=-i\frac{z-r}{1-rz}$, we obtain
\begin{align}\label{T1p5}
\omega(0 , \gamma_t, \D\setminus \gamma_t) &= \omega(ir, [-1,1], \D\cap \Ha) = 1-\omega(ir,\partial\D\cap\Ha, \D\cap \Ha)\nonumber\\
	& = 1- \frac{2}{\pi}\mathrm{Arg}\left(\frac{1+ir}{1-ir}\right) = 1-\frac{2}{\pi}\tan^{-1}\left(\frac{2r}{1-r^2}\right),
\end{align}
where $\Ha$ denotes the upper half-plane. Now, from \eqref{T1p3}, \eqref{T1p4} and \eqref{T1p5}, we get
\begin{equation}\label{T1p6}
|\phi_t(0)-1| \leq 2\pi \left(1-\frac{2}{\pi}\tan^{-1}\left(\frac{2r}{1-r^2}\right)\right).
\end{equation}
But the real function
\[
\psi(r)= \frac{2}{\pi}\tan^{-1}\left(\frac{2r}{1-r^2}\right) + \frac{4}{\pi}\tan^{-1}\left(\frac{1-r}{2\sqrt{r}}\right),
\]
is strictly decreasing for $r\in(0,1)$, and $\lim\limits_{r\to1^-}\psi(r)=1$. This means that
\[
1-\frac{2}{\pi}\tan^{-1}\left(\frac{2r}{1-r^2}\right) \leq \frac{4}{\pi}\tan^{-1}\left(\frac{1-r}{2\sqrt{r}}\right), \quad \text{for all}\quad r\in(0,1),
\]
and, from \eqref{slit},
\[
\frac{2}{\pi}\tan^{-1}\left(\frac{1-r}{2\sqrt{r}}\right) = \omega(0,[r,1), \D\setminus[r,1)).
\]
Therefore, \eqref{T1p6} becomes
\begin{align}\label{T1p7}
|\phi_t(0)-1| &\leq 4\pi\, \omega(0,[r,1), \D\setminus[r,1)) \nonumber\\
	&=4\pi\, \omega\left(0,\widetilde{\Omega_t}\cap [t,\infty), \widetilde{\Omega_t}\setminus[t,\infty)\right). 
\end{align}
Using the maximum principle on the domain $\widetilde{\Omega_t}$ yields
\[
\omega\left(0,\widetilde{\Omega_t}\cap [t,\infty), \widetilde{\Omega_t}\setminus[t,\infty)\right) \leq \omega(0,[t,\infty), \Omega^\sharp\setminus [t,\infty)),
\]
meaning that 
\begin{equation}\label{T1p8}
|\phi_t(0)-1|\leq 4\pi\, \omega(0,[t,\infty), \Omega^\sharp\setminus [t,\infty)).
\end{equation}
Since $\Omega^\sharp$ is symmetric with respect to $\R$, the half-line $[t,\infty)$ lies on a hyperbolic geodesic of $\Omega^\sharp$. Hence, $h^{-1}([t,\infty))=[\phi^\sharp_t(0),1)\subset\D$, which together with \eqref{slit} implies that
\begin{align}\label{T1p9}
\omega(0,[t,\infty), \Omega^\sharp\setminus [t,\infty)) &= \omega\left(0, [\phi^\sharp_t(0),1), \D\setminus [\phi^\sharp_t(0),1) \right)\nonumber\\
	& =\frac{2}{\pi}\tan^{-1}\left(\frac{1-\phi^\sharp_t(0)}{2\sqrt{\phi^\sharp_t(0)}}\right)\nonumber\\
	&\leq 1- \phi^\sharp_t(0) = \lvert \phi^\sharp_t(0) -1 \rvert,
\end{align}
where the inequality follows from the fact that the function 
\[
k(r)=\frac{2}{\pi}\tan^{-1}\left(\frac{1-r}{2\sqrt{r}}\right)-1+r,\quad r\in(0,1)
\]
has a unique minimum in $(0,1)$ and $\lim\limits_{r\to0^+}k(r)=\lim\limits_{r\to1^-}k(r)=0$. Combining \eqref{T1p8} with \eqref{T1p9} yields the desired conclusion.


\section{{\bf Proof of Theorem \ref{T2}}}

The following is a simple observation. 

\begin{lm}\label{T2L1}
If $A, B$ are sets that are convex in the positive direction, then the same is true for the sets $A^*$ and $A\cup B$. If, in addition, $A\cap B\neq \emptyset$, then $A\cap B$ is convex in the positive direction.
\end{lm}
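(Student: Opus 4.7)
The plan is to verify each of the three claims directly from the defining property "$S$ is convex in the positive direction means $z\in S$ implies $z+t\in S$ for every $t\geq 0$." No tools beyond this definition are needed; the difficulty is essentially zero, which matches the paper's description of the lemma as a simple observation.

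First I would handle $A^*$. Pick any $w\in A^*$; by definition $w=\overline{z}$ for some $z\in A$. For $t\geq 0$, the complex conjugate of $w+t$ is $z+t$, which lies in $A$ because $A$ is convex in the positive direction (note that $t$ is real, so reflection commutes with translation by $t$). Hence $w+t\in A^*$.

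Next I would handle $A\cup B$. Any $w\in A\cup B$ lies in at least one of $A,B$; without loss of generality $w\in A$, and then $w+t\in A\subset A\cup B$ for every $t\geq 0$. Finally, for $A\cap B$ in the case $A\cap B\neq\emptyset$: if $w\in A\cap B$, then convexity in the positive direction of each of $A$ and $B$ gives $w+t\in A$ and $w+t\in B$ for every $t\geq 0$, hence $w+t\in A\cap B$. (The nonemptiness assumption is only stated so that $A\cap B$ is a genuine set to which the property applies.)

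The only step where one might pause is the $A^*$ case, just to make sure that the reflection is taken with respect to the real axis (so horizontal translations by nonnegative reals are preserved). Once that is noted, each of the three verifications is a single-line implication, so I would present the whole lemma as one short paragraph with the three checks in sequence.
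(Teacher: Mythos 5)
Your proof is correct and is exactly the direct verification the paper has in mind; the paper labels this lemma ``a simple observation'' and omits the proof entirely, so your three one-line checks (conjugation commutes with real translation, union and intersection are preserved pointwise) supply precisely what was left to the reader.
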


\begin{lm}\label{T2L2}
If $D$ is a domain that is convex in the positive direction, then its polarization $\widehat{D}$ is also a domain, convex in the positive direction.
\end{lm}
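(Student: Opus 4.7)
The plan is to verify three things about $\widehat{D}$: convexity in the positive direction, openness, and connectedness.

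Convexity in the positive direction follows directly from Lemma \ref{T2L1}: both $D$ and $D^*$ are convex in the positive direction, so $D\cup D^*$ is, and so is $D\cap D^*$ whenever it is non-empty. Given $z\in\widehat{D}$, if $\mathrm{Im}\,z\ge 0$ then $z\in D\cup D^*$, and for $t\ge 0$ one has $z+t\in D\cup D^*$ with $\mathrm{Im}(z+t)\ge 0$, hence $z+t\in(D\cup D^*)^+\subset\widehat{D}$; the case $\mathrm{Im}\,z<0$ is analogous via $D\cap D^*$. Openness is checked pointwise: if $\mathrm{Im}\,z\neq 0$, the open sets $D\cup D^*$ or $D\cap D^*$ provide a $\C$-neighborhood of $z$ in $\widehat{D}$; if $z$ is real and in $\widehat{D}$, then $z\in D$, and any small disk $B(z,r)\subset D$ is symmetric about $\R$, hence also lies in $D^*$, giving $B(z,r)\subset D\cap D^*\subset\widehat{D}$.

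The main work is connectedness. I would split $\widehat{D}$ into an upper part $A:=(D\cup D^*)^+$ and a strictly lower part $B:=(D\cap D^*)\cap\{\mathrm{Im}\,z<0\}$, noting that $\widehat{D}=A\cup B$. To see that $A$ is connected, introduce the continuous folding map $\pi\colon D\to\overline{\Ha}$ defined by $\pi(z)=z$ for $\mathrm{Im}\,z\ge 0$ and $\pi(z)=\overline{z}$ for $\mathrm{Im}\,z<0$; a direct verification gives $\pi(D)=(D\cap\overline{\Ha})\cup(D^*\cap\Ha)=A$, so $A$ is connected because $D$ is. It then suffices to show that every $q\in B$ lies in the same path-component of $\widehat{D}$ as its reflection $\overline{q}\in A$.

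I would build the connecting path in three pieces: (i) horizontal rightward from $q$ to $q+T$, staying in the positive-direction-convex set $D\cap D^*$; (ii) a vertical segment from $q+T$ up to $\overline{q}+T$, crossing the real axis; (iii) horizontal leftward from $\overline{q}+T$ back to $\overline{q}$, staying in $D\cap\Ha$. The hard part is justifying (ii), since the segment must actually remain in $\widehat{D}$ throughout its crossing of $\R$. The key observation is that since $q\in D$ has $\mathrm{Im}\,q<0$ while $\overline{q}\in D$ has $\mathrm{Im}\,\overline{q}>0$, the projection $Y:=\mathrm{Im}(D)$ is a connected subset of $\R$ meeting both open half-lines; being an interval, it must contain $0$, so $D$ does cross the real axis. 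The slice function $\mu(y):=\inf\{x:x+iy\in D\}$ is upper semi-continuous on $Y$ (as $D$ is open) and hence bounded above on the compact interval $[\mathrm{Im}\,q,-\mathrm{Im}\,q]\subset Y$; choosing $T$ to exceed this bound forces the entire vertical segment into $D\cap D^*$ and its crossing of the real axis into $D$, placing the whole segment in $\widehat{D}$.
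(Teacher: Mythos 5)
Your proof is correct, and it follows a genuinely different route from the paper's. For connectedness, the paper splits into the two pieces $(D\cup D^*)^+$ and $(D\cap D^*)^-$, shows each is connected, and relies on both containing a common real half-line $[x_0,\infty)$; to handle $(D\cap D^*)^-$ it takes an arbitrary path $\gamma_1$ in $D$ from a point $z_1$ to $x_0$, sets $x_1=\max_{\gamma_1}\mathrm{Re}$, and uses the half-strip $\{x\geq x_1,\ \mathrm{Im}\,z_1\le y\le 0\}\subset D$. You instead write $\widehat{D}=A\cup B$ with $A=(D\cup D^*)^+$ and $B=(D\cap D^*)\cap\{\mathrm{Im}\,z<0\}$, prove $A$ connected via the folding map $\pi(z)=z$ for $\mathrm{Im}\,z\ge 0$, $\pi(z)=\overline{z}$ otherwise (so $A=\pi(D)$ is a continuous image of the connected set $D$), and then join each $q\in B$ to its reflection $\overline{q}\in A$ by a right-then-up-then-left polygonal path, controlling the rightward shift by the upper semi-continuity of the slice function $\mu(y)=\inf\{x:x+iy\in D\}$ on the compact interval $[\mathrm{Im}\,q,-\mathrm{Im}\,q]$. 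Your folding-map argument is cleaner than the paper's (which only asserts without proof that $(D\cup D^*)^+$ is connected) and automatically absorbs the paper's separate case $D\cap\R=\emptyset$, since then $B=\emptyset$. The semicontinuity bound on $\mu$ plays the same role as the paper's bound $x_1$ on $\mathrm{Re}\,\gamma_1$; both guarantee that the vertical segment lands inside $D$ (and, by symmetry of the $y$-range, inside $D^*$) once pushed far enough to the right. You also fill in the openness check, which the paper cites as known. The one remark I'd make is that the clause ``and its crossing of the real axis into $D$'' is redundant: once the vertical segment lies in $D\cap D^*$ it lies in $\widehat{D}$ automatically, including the point on $\R$.
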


\begin{proof}
The fact that $\widehat{D}$ is convex in the positive direction follows from Lemma~\ref{T2L1}. Also, it is known that $\widehat{D}$ is open, whenever $D$ is open. So, we only need to prove that $\widehat{D}$ is connected. If $D\cap\R=\emptyset$, then either $\widehat{D}=D$, or $\widehat{D}=D^*$, and the result follows. Suppose, now, that $D\cap\R\neq\emptyset$. Then, since $D$ and $D^*$ are convex in the positive direction, there exists some $x_0\in \R$, so that the half-line $\{x\in \R\colon x \geq x_0\}$ is contained in $D$ and $D^*$. This implies that $\left(D\cup D^*\right)^+$ is connected. To complete the proof, we need to show that $\left(D\cap D^*\right)^-$ is connected. Let $z_1,z_2\in \left(D\cap D^*\right)^-$. Since $D$ is connected, there exists a path $\gamma_1$ in $D$ that joins $z_1$ and $x_0$. Write $x_1=\max\{\mathrm{Re}\, z \colon z\in \gamma_1\}$, and consider the half-strip
\[
S_1=\{x+iy\in\mathbb{C}\colon\ x\geq x_1,\ \mathrm{Im}\, z_1\leq y\leq0\}.
\]
Because $D$ is convex in the positive direction, $S_1$ is contained in $D$. The same arguments can be applied to $D^*$, to show that we can find $x_1^*\in\R$, so that the half-strip
\[
S_1^*=\{x+iy\in\mathbb{C}\colon\ x\geq x_1^*,\ \mathrm{Im}\, z_1\leq y\leq0\},
\]
is contained in $D^*$. Therefore the half-strip $S_1\cap S_1^*$ is contained in $\left(D\cap D^*\right)^-$. Working similarly for $z_2$, we can show that there exists $x_2\in\R$, so that the half-strip
\[
\{x+iy\in\mathbb{C}\colon\ x\geq x_2,\ \mathrm{Im}\, z_2\leq y\leq0\},
\] 
is contained in $\left(D\cap D^*\right)^-$. The set $\left(D\cap D^*\right)^-$ is also convex in the positive direction, due to Lemma~\ref{T2L1}, and so the half-lines $\{z_1+t\colon t\geq0\}$ and $\{z_2+t\colon t\geq0\}$, both lie in $\left(D\cap D^*\right)^-$. Hence, we can join $z_1$ and $z_2$ with a polygonal path in $\left(D\cap D^*\right)^-$, consisting of two horizontal line segments and a vertical line segment, implying that $\left(D\cap D^*\right)^-$ is path-connected.
\end{proof}

{\it Proof of Theorem \ref{T2}}\\
As before, we conjugate $(\phi_t)$ and $(\widehat{\phi}_t)$, so that $\tau=\widehat{\tau}=1$. Let $h\colon \D \to \Omega$ be the Koenigs function of $(\phi_t)$ and $\widehat{h}\colon \D \to \widehat{\Omega}$ the Koenigs function of $(\widehat{\phi}_t)$. For $t>0$, we set
\[
A_t=\{\phi_s(0):s\ge t\}\;\;\;\hbox{and}\;\;\;\widehat{A}_t=\{\widehat{\phi}_s(0):s\ge t\},
\]
and note that $h(A_t)=[t,\infty)\subset \Omega$ and $\widehat{h}(\widehat{A}_t)=[t,\infty)\subset \widehat{\Omega}$. Then, following the same arguments as the proof of Theorem \ref{T1}, we have that
\begin{equation}\label{T2p1}
\lvert \phi_t(0) - 1 \rvert \leq 2\pi \, \omega(0,A_t, \D\setminus A_t) = 2\pi \, \omega(0,[t,\infty), \Omega\setminus [t,\infty)).
\end{equation}
Due to Theorem \ref{pol} and conformal invariance, 
\[
\omega(0,[t,\infty), \Omega\setminus [t,\infty)) \leq \omega(0,[t,\infty), \widehat{\Omega}\setminus [t,\infty)) = \omega(0, \widehat{A}_t, \D\setminus \widehat{A}_t).
\]
So, \eqref{T2p1} yields
\begin{equation}\label{T2p2}
\lvert \phi_t(0) - 1 \rvert \leq 2\pi\, \omega(0, \widehat{A}_t, \D\setminus \widehat{A}_t).
\end{equation}
Let $\widehat{\Gamma}_t$ be the hyperbolic geodesic of $\D$, passing through $\widehat{\phi}_t(0)$, that is perpendicular to the diameter $(-1,1)$. Denote by $\D_t$ the connected component of $\D\setminus\widehat{\Gamma}_t$ that contains 0. 
It is a consequence of \cite[Theorem 1.1]{BK} that $\widehat{A}_t\subset \D\setminus \D_t$. Therefore, 
by the maximum principle on the domain $\D_t$, we obtain 
\[
\omega(0,\widehat{A}_t, \D\setminus \widehat{A}_t) \leq \omega(0, \widehat{\Gamma}_t, \D_t).
\]
If $r=r(t)\in(-1,1)$ is the intersection of $\widehat{\Gamma}_t$ with $(-1,1)$, then 
\[
\omega(0, \widehat{\Gamma}_t, \D_t) = 1-\frac{2}{\pi}\tan^{-1}\left(\frac{2r}{1-r^2}\right).
\]
But $1-\frac{2}{\pi}\tan^{-1}(x) = \frac{2}{\pi}\tan^{-1}(1/x)$, for all $x\in\R\setminus \{0\}$. So,
\[
\omega(0, \widehat{\Gamma}_t, \D_t) = \frac{2}{\pi}\tan^{-1}\left(\frac{1-r^2}{2r}\right).
\]

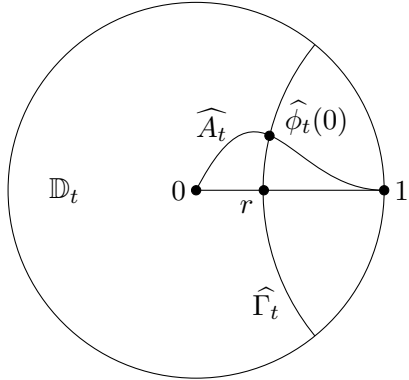
\begin{figure}[h]
\begin{tikzpicture}[scale=2.5]
\draw  (0.,0.) circle (1.cm);
\draw  (0.,0.)-- (1.,0.);
\draw [shift={(1.5834491914722626,0.)}]  plot[domain=2.458064196749281:3.825121110430305,variable=\t]({1.*1.2277260858897487*cos(\t r)+0.*1.2277260858897487*sin(\t r)},{0.*1.2277260858897487*cos(\t r)+1.*1.2277260858897487*sin(\t r)});
\draw (0,0) .. controls (0.36,0.7) and (0.5,0) .. (1,0);
\node[left] at (0.22,0.35) {$\widehat{A_t}$};
\node[left] at (0.5,-0.6) {$\widehat{\Gamma_t}$};
\node at (-.7,0) {$\D_t$};
\filldraw[black] (0.36,0) circle (.7pt) node[below left] {$r$};
\filldraw[black] (0.39,0.29) circle (.7pt) node[above right, xshift=2pt, yshift=-4pt] {$\widehat{\phi_t}(0)$};
\filldraw[black] (0,0) circle (.7pt) node[left] {$0$};
\filldraw[black] (1,0) circle (.7pt) node[right] {$1$};
\end{tikzpicture}
\caption{The hyperbolic geodesic $\widehat{\Gamma_t}$ and the domain $\D_t$.}
\end{figure}

Therefore, \eqref{T2p2} becomes
\[
\lvert \phi_t(0) - 1 \rvert \leq 4  \tan^{-1}\left(\frac{1-r^2}{2r}\right).
\]
An elementary calculus argument shows that
\[
\tan^{-1}\left(\frac{1-x^2}{2x}\right) \leq \frac{\pi}{2} (1-x), \quad \text{for all} \quad x\in(-1,0)\cup(0,1),
\]
which implies 
\begin{equation}\label{T2p3}
\lvert \phi_t(0) - 1 \rvert  \leq 2\pi\, (1-r).
\end{equation}
Suppose that $\widehat{C}_t$ is the Euclidean circle in $\C$ that contains the arc $\widehat{\Gamma}_t$. It is a basic fact of hyperbolic geometry that the center of $\widehat{C}_t$ is a real number that lies in the exterior of $\D$. Therefore,
\[
(1-r)=\mathrm{dist}\left(\widehat{C}_t, 1\right)=\mathrm{dist}\left(\widehat{\Gamma}_t, 1\right)\leq |\widehat{\phi}_t(0)-1|,
\]
which, together with \eqref{T2p3}, completes the proof.



\begin{thebibliography}{99}
	
	
	\bibitem{Aba} M. Abate, {\sl Holomorphic Dynamics on Hyperbolic Riemann Surfaces}.
		De Gruyter, 2023.

	\bibitem{Ba74} A. Baernstein II,\,
	{\sl Integral means, univalent functions and circular symmetrization. }
	Acta Math. 133 (1974), 139-169.


\bibitem{Ba02} A. Baernstein II, \,
{\sl The *-function in complex analysis}. Handbook of complex analysis: geometric function theory, Vol. 1, 229–271. Edited by R. Kühnau,  North-Holland, 2002. 
	
\bibitem{Ba19} A. Baernstein II,  D. Drasin, R. S. Laugesen, \,
{\sl Symmetrization in Analysis.}
Cambridge University Press, 2019.
	

	
	
	\bibitem{BCD} D. Betsakos, M. D.  Contreras, S. Díaz-Madrigal, \,{\sl On the rate of convergence of semigroups of holomorphic functions at the Denjoy-Wolff point},
	Rev. Mat. Iberoam. 36 (2020),  1659-1686.
	
	\bibitem{BK} D. Betsakos and N. Karamanlis, \,{\sl On the monotonicity of the speeds for semigroups of holomorphic self-maps of the unit disc}, Trans. Amer. Math. Soc. 377 (2024), 1299-1319.
	
\bibitem{Bra} F. Bracci,\, {\sl Speeds of convergence of orbits of non-elliptic semigroups of holomorphic self-maps of the unit disk}, Ann. Univ. Mariae Curie-Sk\l odowska Sect. A 73 (2019),  21-43.
	
	\bibitem{book} F. Bracci, M. D. Contreras, and S. D\'{i}az-Madrigal, \textit{Continuous Semigroups of Holomorphic Functions in the Unit Disc}. Springer, 2020.
	

	
	
	
\bibitem{BCK} F. Bracci, D. Cordella, M. Kourou, \,{\sl  Asymptotic monotonicity of the orthogonal speed and rate of convergence for semigroups of holomorphic self-maps of the unit disc}.
Rev. Mat. Iberoam. 38 (2022), 527-546.	
	
	
	
	
	
\bibitem{Dub} V. N. Dubinin,\, {\sl  Condenser Capacities and Symmetrization in Geometric Function Theory}. Springer, 2014. 	
	

	
	

\bibitem{ES} M. Elin, D. Shoikhet,  {\sl Linearization Models for Complex Dynamical Systems.
	Topics in univalent functions, functional equations and semigroup
	theory}. Birkh\"auser Basel, 2010.
	
	
\bibitem{FRW} C. H. FitzgGerald, B. Rodin, S. E. Warschawski, \,{\sl Estimates of the harmonic measure of a continuum in the unit disk}, Trans. Amer. Math. Soc. 287 (1985), 681-685.
	
	
	
	


	

\bibitem{Hay94} W. K. Hayman,\,
{\sl Multivalent Functions. }
Cambridge Univ. Press, second edition 1994.


	
	

	
	
	
	

\bibitem{Ran} T. Ransford, {\sl Potential Theory in the Complex Plane},
Cambridge University Press (1995).
	


\bibitem{Sol2} A. Yu. Solynin,\,
{\sl Harmonic measure of continua with fixed diameter } (in
Russian). Zap. Nauchn. Semin. LOMI 144 (1985), 146-149; English
transl. in J. Soviet Math. 38 (1987), 2140-2142.


\bibitem{Sol} A. Yu. Solynin,\,
{\sl Functional inequalities via polarization}.
Algebra i Analiz 8 (1996), 148-185 (in Russian); English  transl. in
St.Petersburg Math. J. 8 (1997), 1015-1038.
	

\end{thebibliography}
\end{document}